\theoremstyle{plain}
\newtheorem{thm}{Theorem}[section]
\newtheorem{lem}[thm]{Lemma}
\newtheorem{cor}[thm]{Corollary}
\newtheorem{prop}[thm]{Proposition}
\theoremstyle{definition}
\newtheorem{defi}[thm]{Definition}
\theoremstyle{remark}
\newcommand{\lemref}[1]{\hyperref[#1]{Lemma \ref*{#1}}}
\newcommand{\thmref}[1]{\hyperref[#1]{Theorem \ref*{#1}}}
\newcommand{\propref}[1]{\hyperref[#1]{Proposition \ref*{#1}}}
\newcommand{\corref}[1]{\hyperref[#1]{Corollary \ref*{#1}}}
\newcommand{\defref}[1]{\hyperref[#1]{Definition \ref*{#1}}}
\newcommand{\remref}[1]{\hyperref[#1]{Remark \ref*{#1}}}
\newcommand{\conjref}[1]{\hyperref[#1]{Conjecture \ref*{#1}}}
\newcommand{\Ker}{\mathrm{Ker}}
\newcommand*{\defeq}{\mathrel{\rlap{%
                     \raisebox{0.27ex}{$\m@th\cdot$}}%
                     \raisebox{-0.27ex}{$\m@th\cdot$}}%
                     =}
\numberwithin{equation}{section}
\def\@setcopyright{}
\def\serieslogo@{}
\begin{document}

\title{Gradients of sequences of subgroups in a direct product}

\author{Nikolay Nikolov}
\address{University of Oxford, OX2 6GG Oxford, UK}
\email{nikolay.nikolov@maths.ox.ac.uk‏}

\author{Zvi Shemtov}
\address{Raymond and Beverly Sackler School of Mathematical Sciences, Tel-Aviv University, Tel-Aviv, Israel}
\email{s.tzvika@gmail.com}

\author{Mark Shusterman}
\address{Raymond and Beverly Sackler School of Mathematical Sciences, Tel-Aviv University, Tel-Aviv, Israel}
\email{markshus@mail.tau.ac.il} 
\date{}
\begin{abstract}

For a sequence $\{U_n\}_{n = 1}^\infty$ of finite index subgroups of a direct product $G \defeq A \times B$ of finitely generated groups, we show that 
$$\lim_{n \to \infty} \frac{\min\{|X| : \langle X \rangle = U_n\}}{[G : U_n]} = 0$$ 
once $[A : A \cap U_n], [B : B \cap U_n] \to \infty$ as $n \to \infty$.
Our proof relies on the classification of finite simple groups.
For $A,B$ that are finitely presented we show that
$$ \lim_{n \to \infty} \frac{\log |\mathrm{Torsion}(U_n^{\mathrm{ab}})|}{[G : U_n]} = 0. $$

\end{abstract}

\maketitle

\section{Introduction}

With motivation coming from the theory of $3$-manifolds, 
Lackenby defined in \cite{L} the rank gradient of a sequence $\{U_n\}_{n=1}^\infty$ of finite index subgroups of a finitely generated group $G$ to be the following combinatorial invariant
\begin{equation} \label{DefRGEq}
\inf_{n}\frac{d(U_n)-1}{[G:U_n]}
\end{equation}
where $d(K)$ is the least cardinality of a generating set for the group $K$.
As can be seen from \cite{AGN, AJN, AN, AG, Gi, GK, KN, Lack2, Loh, O, P, Sch, Sh0, Sh1, Sh2} the rank gradient has been extensively studied, calculated in various cases, and related to the notion of cost from ergodic theory.
Furthermore, the rank gradient controls other interesting combinatorial invariants such as the $p$-gradient,
and the rate of growth of the first Betti number in finite index subgroups, that were studied, for instance, in \cite{7s, AJN, BLLS, CE, CW, EL, KS, L2, LLS, Lu, Lu2, LuO, P, Sau}.  
It is one of the main goals of asymptotic and measured group theory to calculate these gradients,
and if possible, to show that they coincide for various sequences, 
and equal to the appropriate analytic and ergodic invariants (such as the first $\ell^2$-Betti number and the cost of $G$).
Here we accomplish this task for all the aforementioned gradients and all sequences in case that $G$ is a direct product.

\begin{thm} \label{Res}
Let $A,B$ be finitely generated groups, 
set $G \defeq A \times B$, 
and let $\{U_n\}_{n=1}^\infty$ be a sequence of finite index subgroups of $G$.
Assume that 
\begin{equation} \label{IndAs1}
[A : A \cap U_n], [B : B \cap U_n] \to \infty
\end{equation}
as $n \to \infty$. 
Then 
\begin{equation} \label{aimRes}
\lim_{n \to \infty}\frac{d(U_n)-1}{[G:U_n]} = 0.
\end{equation}
\end{thm}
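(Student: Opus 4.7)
My plan is to apply Goursat's lemma to decompose $U_n$, to bound $d(U_n)$ in terms of the common quotient $Q_n$ and its relator count, and then to invoke the classification of finite simple groups to bound that relator count sublinearly in $|Q_n|$.

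\emph{Goursat setup.} Set $A_1 := \pi_A(U_n)$, $A_2 := A \cap U_n$, and analogously $B_1, B_2$. Goursat's lemma identifies $U_n$ with the fibre product $A_1 \times_{Q_n} B_1$, where $Q_n := A_1/A_2 \cong B_1/B_2$ and both projections $U_n \to A_1$, $U_n \to B_1$ are surjective. Consequently $[G:U_n] = [A:A_1]\cdot[B:B_1]\cdot|Q_n|$, and hypothesis \eqref{IndAs1} reads $[A:A_1]\cdot|Q_n| \to \infty$ and $[B:B_1]\cdot|Q_n| \to \infty$.

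\emph{Generating $U_n$.} Lift generating sets of $A_1$ and $B_1$ to $U_n$ and let $T_n$ be the subgroup they generate, so that $|T_n| \leq d(A_1)+d(B_1)$ and $\pi_A(T_n)=A_1$, $\pi_B(T_n)=B_1$. Next, pick normal generators $\alpha_1,\dots,\alpha_k$ of $A_2$ in $A_1$ and lift each to $(\alpha_i,1) \in U_n$; do the same for $B_2$. Call the enlarged generating set $\tilde T_n$, so $|\tilde T_n| \leq d(A_1)+d(B_1)+d_{A_1}(A_2)+d_{B_1}(B_2)$. I claim $\langle \tilde T_n \rangle = U_n$: since $\pi_A(T_n)=A_1$, the $T_n$-conjugates of $(\alpha_i,1)$ give $(a\alpha_i a^{-1},1)$ for all $a \in A_1$, which together generate $A_2 \times \{1\}$ by choice of the $\alpha_i$; similarly $\{1\}\times B_2 \subset \langle \tilde T_n \rangle$, so $A_2 \times B_2 \subseteq \langle \tilde T_n\rangle$; and since $T_n$ surjects onto $A_1$ and $B_1$, any $(a,b)\in U_n$ can be written as an element of $T_n$ times an element of $A_2 \times B_2$. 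Combining this with Schreier's $d(A_1) \leq [A:A_1]d(A)$ and $d_{A_1}(A_2) \leq r(Q_n)+d(A_1)$ (where $r(Q_n)$ is the minimum number of relators in any finite presentation of $Q_n$), I obtain
\[
d(U_n) \leq 2[A:A_1]d(A) + 2[B:B_1]d(B) + 2r(Q_n).
\]

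\emph{Conclusion via CFSG.} Dividing by $[G:U_n]=[A:A_1][B:B_1]|Q_n|$ and using $[A:A_2]=[A:A_1]|Q_n|$, $[B:B_2]=[B:B_1]|Q_n|$, the right-hand side becomes
\[
\frac{2d(A)}{[B:B_2]} + \frac{2d(B)}{[A:A_2]} + \frac{2r(Q_n)}{[G:U_n]},
\]
whose first two summands tend to $0$ by \eqref{IndAs1}. The third summand is bounded above by $2r(Q_n)/|Q_n|$ when $|Q_n|\to\infty$, and by $O(1)/[G:U_n]$ when $|Q_n|$ stays bounded; either way, the crux is the CFSG-based estimate $r(Q_n)=o(|Q_n|)$, which I would obtain from the Guralnick--Kantor--Kassabov--Lubotzky theorem (bounded presentations of non-abelian finite simple groups) combined with a chief-series induction on $Q_n$, yielding a polylogarithmic bound $r(Q_n)=O(\mathrm{polylog}\,|Q_n|)$.

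The main technical obstacle is the estimate $r(Q_n)=o(|Q_n|)$: non-abelian simple chief factors are perfectly handled by Guralnick--Kantor--Kassabov--Lubotzky, but abelian chief factors $(\mathbb{Z}/p)^k$ arising in the induction require careful exploitation of the irreducible $Q_n$-module structure on each such factor to prevent a prohibitive polynomial-in-$|Q_n|$ contribution to the relator count. Threading CFSG through this combined simple/abelian bookkeeping is what I expect to be the delicate heart of the argument.
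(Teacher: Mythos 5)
Your strategy is essentially the paper's own: the Goursat decomposition of $U_n$ as a fibre product is exactly the structure exploited in the proof of \thmref{TechResThm}(3) (there phrased via $H \leq \pi_A(H)\times\pi_B(H)$ and \propref{NormalityProp}), your generating set built from lifts of generators of the two projections together with normal generators of the kernels is the set $S\cup R_A\cup R_B$ of that proof, and the passage from $d_{A_1}(A_2)$ to a relation count for $Q_n$ is \propref{connProp}. The index bookkeeping is correct and the reduction of the theorem to an estimate on $r(Q_n)$ is sound. (Two small remarks: you only need normal generators on one side, since $\Ker(\pi_B|_{U_n})=A_2\times\{1\}$ together with surjectivity onto $B_1$ already forces generation; and $r(Q_n)$ must be taken relative to the image of a generating set of $A_1$, which costs a factor of $d(A_1)\leq [A:A_1]d(A)$ that has to be carried through the final division --- harmless, but not free.)

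The genuine gap is the step you yourself flag as the ``delicate heart'': you have not proved $r(Q_n)=o(|Q_n|)$, and the specific bound you aim for, $r(Q_n)=O(\mathrm{polylog}\,|Q_n|)$, is precisely Conjecture 2 of \cite{M}, which is open (the paper says as much in Section 4). Fortunately you do not need it. Mann's Theorem 1 in \cite{M} gives the chief-series recursion \eqref{AvinoamBoundEq}: modding out a minimal normal subgroup $N$ costs only $6d\log_2|N|$ relations plus $r(S,W)$ for a single simple factor $S$ of $N$. This is exactly the device that tames the abelian chief factors you worry about --- an elementary abelian $N$ of order $p^k$ contributes $O(dk\log p)$ relations, nothing polynomial in $|N|$ --- while the non-abelian simple factors are handled by the GKKL bound $r(S,W)\leq 8|S|^{3/7}$ from \cite{GKKL}. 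Telescoping yields \corref{mainCor}, $r(K,T)\leq 128|T||K|^{3/7}$: sublinear in $|K|$ per generator, but far from polylogarithmic. Substituting this, your third summand is at most $256\,d(A)/\big([B:B_1]\,|Q_n|^{4/7}\big)$, which tends to $0$ since $[B:B_1]\,|Q_n|=[B:B\cap U_n]\to\infty$; so the weaker, provable estimate closes your argument without any case split on $|Q_n|$. As written, however, the proposal rests on an unproved (indeed conjectural) relation bound, so the key quantitative input is missing.
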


We should note that even the vanishing of the $p$-gradient (or the Betti gradient) was not previously known for these sequences.

An open problem in ergodic theory analogous to \thmref{Res} is to determine whether the direct product of every pair of finitely generated groups has fixed price. 
Indeed, the special case of \thmref{Res} considering normal (or even Farber) chains (nested sequences), 
is at the focus of \cite{ST}. 
Another special case that was already known is when $A,B$ are torsion-free 
(and thus $G$ is right-angled, see \cite{AGN, KM}) and the sequence is Farber.

Additional motivation for \thmref{Res} comes directly from (arithmetic) $3$-manifolds.
As recent breakthroughs by Agol, Wise (and others) tell us that many $3$-manifolds virtually fiber over the circle,
we know that their fundamental groups are semidirect products (up to a finite index).
Calculating the rank gradient (or the $p$-gradient) of semidirect products is both notoriously difficult,
and interesting from a topological and number-theoretic point of view,
so it makes sense to examine the special case of direct products first.

Our assumption \eqref{IndAs1} is necessary, since otherwise the gradients may become positive once this is the case for one of the factors $A,B$.
Indeed the sequences we consider are very general, and this allows us to obtain vanishing of gradients for Farber chains in groups (virtually) generated by a pair of commuting infinite subgroups.
These are the groups `presentable by a product' that were studied in \cite{D, DK, KL0, KL}.

Having considered the Betti gradient (measuring the free part of abelianizations in sequences), 
it is tempting to study the torsion gradient
\begin{equation} \label{DefTGEq}
\liminf_{n \to \infty} \frac{\log |\mathrm{Torsion}(U_n^{\mathrm{ab}})|}{[G : U_n]}
\end{equation}
for which we assume that $G$ is finitely presented (otherwise, \eqref{DefTGEq} may be infinite).
This gradient, having connections to number theory, geometry, and topology,
became an object of intensive study, as witnessed by 
\cite{AGN, BGS, BL, BV, BVS, Braun, BD, CV, KKN, Lu, Lu2, Nik, Raim, Riv, Seng, SW, Sun}.

\begin{thm} \label{SecRes}
Let $A,B$ be finitely presented groups, 
set $G \defeq A \times B$, 
and let $\{U_n\}_{n=1}^\infty$ be a sequence of finite index subgroups of $G$.
Assume that 
\begin{equation} \label{IndAs2}
[A : A \cap U_n], [B : B \cap U_n] \to \infty
\end{equation}
as $n \to \infty$. 
Then 
\begin{equation} \label{aimRes}
\frac{\log |\mathrm{Torsion}(U_n^{\mathrm{ab}})|}{[G : U_n]} = 0.
\end{equation}
\end{thm}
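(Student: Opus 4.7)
The plan is to leverage the direct product structure via Goursat's lemma and a Hochschild--Serre spectral sequence, combined with a uniform \emph{linear} bound on torsion of abelianizations of finite-index subgroups of a finitely presented group. Let $A_0 = U_n \cap A$, $A_1 = \pi_A(U_n)$, $B_0 = U_n \cap B$, $B_1 = \pi_B(U_n)$, with Goursat isomorphism $A_1/A_0 \cong B_1/B_0$; set $V_n = A_0 \times B_0 \trianglelefteq U_n$, $Q_n = U_n/V_n$, $a_n = [A:A_1]$, $b_n = [B:B_1]$, $a_n' = |Q_n|$, so $[G:U_n] = a_n b_n a_n'$, and the hypothesis translates to $[A:A_0] = a_n a_n' \to \infty$ and $[B:B_0] = b_n a_n' \to \infty$.

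The key input is a linear torsion bound: for any finitely presented group $A$ there is $C_A > 0$ such that $\log |\mathrm{Torsion}(H^{\mathrm{ab}})| \leq C_A \cdot [A:H]$ for every finite-index $H \leq A$. To prove it I would take a finite $2$-complex $K_A$ with $\pi_1(K_A) = A$, form the cover corresponding to $H$, and observe that the cellular boundary $\partial_2$ has $O([A:H])$ rows and columns while every column (coming from a lifted $2$-cell) has $\ell^1$-norm bounded by the maximal attaching-map length $L_A$. Hadamard's inequality with column $\ell^1$-norms then bounds every non-zero $k \times k$ minor by $L_A^k$, avoiding the $\sqrt k$-factor of the naive $\ell^2$-Hadamard, so $\log |\mathrm{Torsion}(H^{\mathrm{ab}})| \leq k \log L_A = O([A:H])$.

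Next I would apply the five-term exact sequence of $1 \to V_n \to U_n \to Q_n \to 1$:
$$H_2(Q_n) \to (V_n^{\mathrm{ab}})_{Q_n} \to U_n^{\mathrm{ab}} \to Q_n^{\mathrm{ab}} \to 0.$$
Since $H_2(Q_n)$ is a finite group (Schur multiplier of a finite group), its image in $(V_n^{\mathrm{ab}})_{Q_n}$ is a finite subgroup, hence contained in the torsion subgroup; quotienting by it can only shrink torsion, giving $|\mathrm{Torsion}(U_n^{\mathrm{ab}})| \leq |\mathrm{Torsion}((V_n^{\mathrm{ab}})_{Q_n})| \cdot |Q_n^{\mathrm{ab}}|$. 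Because $A$ and $B$ commute in $G$, the $Q_n$-action preserves the splitting $V_n^{\mathrm{ab}} = A_0^{\mathrm{ab}} \oplus B_0^{\mathrm{ab}}$; the analogous five-term sequence for $1 \to A_0 \to A_1 \to Q_n \to 1$ yields $\log|\mathrm{Torsion}((A_0^{\mathrm{ab}})_{Q_n})| \leq \log|\mathrm{Torsion}(A_1^{\mathrm{ab}})| + \log|H_2(Q_n)|$, and symmetrically for $B$. Combining these with the standard Schur multiplier bound $\log|H_2(Q_n)| = O((\log a_n')^2)$ and the linear torsion bound applied to $A_1, B_1$, and dividing by $[G:U_n] = a_n b_n a_n'$, yields
$$\frac{\log |\mathrm{Torsion}(U_n^{\mathrm{ab}})|}{[G:U_n]} \leq \frac{C_A}{[B:B_0]} + \frac{C_B}{[A:A_0]} + \frac{O\big((\log [G:U_n])^2\big)}{[G:U_n]} \;\longrightarrow\; 0.$$

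The main obstacle is the linear torsion bound of the second paragraph: the naive $\ell^2$-Hadamard introduces an extra $\log[A:H]$ factor that would contribute a non-vanishing $\log[G:U_n]$ term in the final estimate, so it is essential to exploit that the cellular boundary matrices arising from covers of a single fixed finite $2$-complex have \emph{uniformly} bounded column $\ell^1$-norms. Once that refined bound is in hand, the remainder is a straightforward combination of Hochschild--Serre five-term sequences and the Schur multiplier bound.
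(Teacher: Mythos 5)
Your argument is correct and is essentially the paper's: \lemref{TorLem} carries out the same reduction of $t(U_n^{\mathrm{ab}})$ to $t(\pi_A(U_n)^{\mathrm{ab}})\cdot t(\pi_B(U_n)^{\mathrm{ab}})$ up to an error controlled by Schur multipliers of finite quotients of order at most $[G:U_n]$ (bounded by $|E|^{\log |E|}$, exactly your $O((\log [G:U_n])^2)$ term), only phrased via the explicit commutator sandwich $[\pi_A(H),A_0]\times[\pi_B(H),B_0]\leq H'\leq H\cap\big(\pi_A(H)'\times\pi_B(H)'\big)$ and \corref{MultipCor} rather than via Goursat plus five-term exact sequences. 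The concluding step is identical as well: the linear bound $\log|t(H^{\mathrm{ab}})|\leq C_A\,[A:H]$ for finitely presented groups, which the paper invokes as a standard fact and which your $\ell^1$-Hadamard argument correctly supplies, makes the main terms vanish because $[A:A\cap U_n],[B:B\cap U_n]\to\infty$.
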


\thmref{SecRes} provides the first large family of groups for which the torsion gradient vanishes in (almost) all sequences, including some that are not even Farber.

\section{Preliminaries}
Let $G$ be a finitely generated group 
and let $H\leq G$ be a finite index subgroup of $G$.
We will be frequently using the bound
\begin{equation} \label{SBEq}
d(H) \leq d(G)[G:H].
\end{equation}
that follows from \cite[Theorem 11.44]{R}. 

\begin{prop} \label{NormalityProp}
Let $A,B$ be groups, set $G \defeq A \times B$ and let $K \leq G$ be a subgroup for which $BK = G$.
Then $A \cap K \lhd A$.
\end{prop}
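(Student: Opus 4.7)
The statement is that $A \cap K$ is normal in $A$, so my plan is to fix arbitrary $a \in A$ and $x \in A \cap K$ and verify that $axa^{-1} \in A \cap K$. Since both $a$ and $x$ lie in $A$, membership in $A$ is automatic, and the entire content of the proposition is to establish $axa^{-1} \in K$.

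The approach is to exploit the covering hypothesis $BK = G$ to write $a = bk$ for some $b \in B$ and $k \in K$. The crucial structural feature of $G = A \times B$ that I will use is that elements of $A$ commute pointwise with elements of $B$. Consequently, conjugation by the chosen $k$ on an element of $A$ should coincide with conjugation by $a$ on that element, since the $b$-factor in the decomposition $a = bk$ acts trivially by commutativity.

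Concretely, I would solve for $k = b^{-1}a$ and then compute
$$k x k^{-1} = b^{-1} a x a^{-1} b = a x a^{-1},$$
where the last equality uses that $b \in B$ commutes with $axa^{-1} \in A$. Since $k$ and $x$ both lie in $K$, the left-hand side lies in $K$, and hence so does $axa^{-1}$, completing the verification.

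I do not anticipate any serious obstacle: the proposition is a formal consequence of the product structure of $G$ together with the covering hypothesis $BK = G$. The only minor point of care is keeping track of the order of factors in $a = bk$ (versus $a = kb$), but the same commutation trick works under either convention, so the argument should go through essentially as sketched.
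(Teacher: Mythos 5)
Your proof is correct and is essentially the paper's own argument: both decompose $a = bk$ via $BK = G$ and use $[A,B] = \{1\}$ to identify $axa^{-1}$ with $kxk^{-1} \in K$. The only cosmetic difference is that you read the identity in the direction $kxk^{-1} = axa^{-1}$ rather than $axa^{-1} = bkxk^{-1}b^{-1}$.
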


\begin{proof}
Take $a \in A, \ x \in A \cap K$ and note that since $A \subseteq BK$, there exist $b \in B, \ k \in K$ 
such that $a = bk$. As $A \lhd G$ we see that $kxk^{-1} \in A \cap K$, and $[A,B] = \{1\}$
implies that $axa^{-1} = bkxk^{-1}b^{-1} = kxk^{-1} \in A \cap K$.
\end{proof}

\begin{defi} \label{NormalDef}
Let $G$ be a group and let $S,X \subseteq G$ be subsets.
We denote by $\langle S \rangle^X$ the subgroup of $G$ generated by the conjugates of $S$ by elements of $X$.
For a normal subgroup $N \lhd G$ we define $d_G(N)$ to be the least cardinality of a subset $S \subseteq N$ for which $\langle S \rangle^G = N$.
\end{defi}

\begin{defi} \label{RelsDef}
Let $K$ be a finite group, let $d \in \mathbb{N}$, and let $T=\{t_1,\dots,t_d\}$ be a generating multiset of $K$.
Take a free group $F$ on $X = \{x_1,\dots,x_d\}$ and let $\varphi \colon F \to K$ be the unique surjection with $\varphi(x_i) = t_i$ for $1 \leq i \leq d$.
We set $r(K,T) \defeq d_F(\Ker(\varphi))$ and think of this quantity as the least number of relations needed to present $K$ using the generating multiset $T$.  
\end{defi}

\begin{prop} \label{connProp}
Let $G$ be a group generated by a finite subset $T \subseteq G$,
let $N\lhd G$ be a normal subgroup of finite index in $G$,
and define a multiset $\overline{T} \subseteq G/N$ by $\overline{T} \defeq \{tN \ | \ t \in T\}$.
Then $d_G(N) \leq r(G/N,\overline{T})$.
\end{prop}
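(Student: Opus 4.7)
The plan is to lift a minimal normal generating set for the kernel of the free presentation of $G/N$ to a normal generating set for $N$ in $G$, via the obvious surjection from the free group onto $G$.

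First I would fix the setup. Let $d = |T|$, write $T = \{t_1,\dots,t_d\}$, and take $F$ to be the free group on $X = \{x_1,\dots,x_d\}$. There are two natural surjections out of $F$: the map $\varphi \colon F \to G/N$ from \defref{RelsDef} sending $x_i \mapsto t_i N$, and the map $\psi \colon F \to G$ sending $x_i \mapsto t_i$ (which is surjective because $T$ generates $G$). Writing $\pi \colon G \to G/N$ for the quotient, one has $\varphi = \pi \circ \psi$, so $\Ker(\varphi) = \psi^{-1}(N)$, and surjectivity of $\psi$ gives $\psi(\Ker(\varphi)) = N$.

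Next, set $r \defeq r(G/N,\overline{T})$ and, by \defref{RelsDef}, choose $w_1,\dots,w_r \in \Ker(\varphi)$ whose normal closure in $F$ is all of $\Ker(\varphi)$, i.e.\ $\langle w_1,\dots,w_r\rangle^F = \Ker(\varphi)$. Define $s_i \defeq \psi(w_i) \in N$ for $1 \leq i \leq r$. Since $\psi$ is a surjective group homomorphism, it carries a normal closure in $F$ to a normal closure in $G$:
$$\psi\bigl(\langle w_1,\dots,w_r\rangle^F\bigr) = \langle s_1,\dots,s_r\rangle^{\psi(F)} = \langle s_1,\dots,s_r\rangle^G.$$
Combining this with $\psi(\Ker(\varphi)) = N$ established above, we conclude $\langle s_1,\dots,s_r\rangle^G = N$, which witnesses $d_G(N) \leq r = r(G/N,\overline{T})$.

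There is essentially no obstacle here; the only point that requires (a line of) care is the identity $\psi(\Ker(\varphi)) = N$, which could fail if $\psi$ were not surjective, but surjectivity is guaranteed by the hypothesis that $T$ generates $G$. The statement is really just the observation that relations for the quotient $G/N$ in the generators $\overline{T}$ pull back to normal generators of $N$ in $G$ under the map induced by $x_i \mapsto t_i$.
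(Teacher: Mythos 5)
Your proof is correct and is essentially identical to the paper's: both lift a minimal normal generating set of $\Ker(\varphi)$ through the surjection $\psi \colon F \to G$ induced by $x_i \mapsto t_i$, using that a surjective homomorphism carries normal closures to normal closures. The only difference is that you spell out the verification $\psi(\Ker(\varphi)) = N$ via $\varphi = \pi \circ \psi$, which the paper leaves implicit.
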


\begin{proof}
Let $F$ be the free group on $T$, 
and let $\varphi \colon F \to G/N$ be the unique surjection for which $\varphi(t) = tN$ for all $t \in T$.
By \ref{RelsDef} there exists a subset $S \subseteq \Ker(\varphi)$ of cardinality $r(G/N,\overline{T})$ such that 
$\langle S \rangle^F = \Ker(\varphi)$.
Define a surjection $\psi \colon F \to G$ by $\psi|_T = \mathrm{Id}_T$, and observe that
\begin{equation}
N = \psi(\Ker(\varphi)) = \psi(\langle S \rangle^F) = \langle \psi(S) \rangle^G
\end{equation}
so that $d_G(N) \stackrel{\ref{NormalDef}}{\leq} |\psi(S)| \leq |S| = r(G/N,\overline{T})$ as required.
\end{proof}

Let $K$ be a finite group generated by a multiset $T=\{t_1,\dots,t_d\}$, let $N$ be a minimal normal subgroup of $K$,
let $S$ be a finite simple group onto which $N$ surjects, and define the multiset $\overline{T} \defeq \{t_1N,\dots,t_dN\}$.
The following bound comes from the argument appearing in the proof of Theorem 1 in \cite{M} 
and in the discussion following the proof of Theorem 2 therein. 
\begin{equation} \label{AvinoamBoundEq}
r(K,T) \leq r(K/N,\overline{T})+6d\log_2{|N|}+ r(S,W)
\end{equation}
where $W$ is any pair of elements generating $S$.
It follows from \cite[Corollary A', Lemma 2.1, Theorem 4.34]{GKKL} and \cite[Theorem 1]{M} that
\begin{equation} \label{SimpleBoundEq}
r(S,W) \leq 8|S|^{3/7}.
\end{equation} 

\begin{cor} \label{mainCor}
Let $K$ be a finite group generated by a multiset $T$.
Then
\begin{equation}
r(K,T) \leq 128|T||K|^{3/7}.
\end{equation} 

\end{cor}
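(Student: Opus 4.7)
The plan is to proceed by induction on $|K|$, using the two ingredients \eqref{AvinoamBoundEq} and \eqref{SimpleBoundEq} displayed just above the statement. Write $d = |T|$. The base case $|K| = 1$ is immediate, since normally generating the free group on the image of $T$ requires at most $d$ elements.

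For the inductive step, pick a minimal normal subgroup $N$ of $K$; since minimal normal subgroups of finite groups are characteristically simple, $N$ surjects onto some finite simple group $S$ with $|S| \leq |N|$. Substituting \eqref{SimpleBoundEq} into \eqref{AvinoamBoundEq} yields
\[
r(K,T) \leq r(K/N, \overline{T}) + 6 d \log_2 |N| + 8 |N|^{3/7}.
\]
If $K$ is itself simple, I take $N = K$; then $K/N$ is trivial and the desired inequality reduces to $6 d \log_2 |K| + 8 |K|^{3/7} \leq 128 d |K|^{3/7}$, which is an easy consequence of the fact that $\log_2 x$ is dominated by a small multiple of $x^{3/7}$. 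Otherwise $2 \leq |K/N| < |K|$, and the inductive hypothesis applies to give $r(K/N, \overline{T}) \leq 128 d |K/N|^{3/7}$.

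With both pieces in place, closing the induction reduces to the purely numerical inequality
\[
128 x^{3/7}(y^{3/7} - 1) \geq 6 \log_2 y + 8 y^{3/7}
\]
for all integers $x, y \geq 2$; since the left hand side is at least $128 \cdot 2^{3/7}(y^{3/7} - 1)$, the tightest case $y = 2$ can be checked by direct evaluation, and the growth rate of $y^{3/7}$ against $\log_2 y$ handles the rest. There is no real obstacle here: the entire argument is bookkeeping once \eqref{AvinoamBoundEq} and \eqref{SimpleBoundEq} are in hand. The only conceptual input is the exponent $3/7$ from \eqref{SimpleBoundEq}, which rests on the classification of finite simple groups; the constant $128$ is then chosen precisely so that the logarithmic error $6 d \log_2 |N|$ is absorbed comfortably by the gain $128 d\bigl(|K|^{3/7} - |K/N|^{3/7}\bigr)$ obtained upon passing to the quotient $K/N$.
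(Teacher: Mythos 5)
Your argument is correct and follows essentially the same route as the paper: induction on $|K|$ via a minimal normal subgroup $N$, combining \eqref{AvinoamBoundEq} with \eqref{SimpleBoundEq} (using $|S|\le|N|$), and closing with an elementary numerical estimate whose tightest case is $|K/N|=|N|=2$. The paper merely organizes the arithmetic differently (absorbing $6\log_2|N|$ into $24|N|^{3/7}$ and using $a+b\le ab+1$), so there is nothing substantive to add.
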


\begin{proof}

Let $N$ be a minimal normal subgroup of $K$,
and let $S$ be a finite simple factor of $N$.
By induction, \eqref{AvinoamBoundEq}, and \eqref{SimpleBoundEq}, we have
\begin{equation}
\begin{split}
r(K,T) &\leq r(K/N,\overline{T}) + 6|T|\log_2{|N|}+ 8|S|^{3/7} \\
&\leq 128|T|\Big(\frac{|K|}{|N|}\Big)^{3/7} + 6|T|\log_2{|N|} + 8|N|^{3/7} \\
&\leq 128|T|\Big(\frac{|K|}{|N|}\Big)^{3/7} + 24|T||N|^{3/7} + 8|T||N|^{3/7} \\
&= 128|T| \Bigg( \Big(\frac{|K|}{|N|}\Big)^{3/7} + |N|^{3/7} - \frac{3}{4}|N|^{3/7} \Bigg) \\
&\leq 128|T| \Big( |K|^{3/7} + 1 - \frac{3}{4}|N|^{3/7} \Big) \\
&\leq 128|T| \Big( |K|^{3/7} + 1 - \frac{3}{4}2^{3/7} \Big) \leq 128|T||K|^{3/7}.
\end{split}
\end{equation}
\end{proof}

Let $E$ be a finite group with a presentation
\begin{equation}
1 \longrightarrow N \longrightarrow F \longrightarrow E \longrightarrow 1
\end{equation}
where $F$ is free.
The Schur multiplier (see \cite[Theorem 10.12]{Rot}) of $E$ is
\begin{equation}
M(E) = \big(N \cap [F,F]\big) / [F,N].
\end{equation}
This is a finite abelian group whose order admits the following bound.
\begin{cor}
Let $E$ be a finite group. Then
\begin{equation}
|M(E)| \leq |E|^{\log |E|}.
\end{equation}

\end{cor}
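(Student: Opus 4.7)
To prove $|M(E)| \leq |E|^{\log |E|}$, I would combine Hopf's formula with restriction to Sylow $p$-subgroups and Schur's theorem on the exponent.

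Let $F$ be a free group of rank $d(E) \leq \log_2 |E|$ with a surjection to $E$ and kernel $N$, so Hopf's formula gives $M(E) = (N \cap [F,F])/[F,N]$. Since $fnf^{-1}n^{-1} \in [F,N]$ for every $f \in F$ and $n \in N$, the quotient $E = F/N$ acts trivially on $N/[F,N]$. Hence any normal generating set of $N$ in $F$ projects to an abelian generating set of $N/[F,N]$, whose torsion subgroup is precisely $M(E)$ (the free quotient embeds in $F^{\mathrm{ab}} = \mathbb{Z}^{d(E)}$). Combined with Schur's theorem $\exp M(E) \mid |E|$, this yields $|M(E)| \leq |E|^{r(E,T)}$ for any generating multiset $T$ of $E$. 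Applying \corref{mainCor} directly, however, only produces a bound of order $|E|^{O(|E|^{3/7} \log|E|)}$, which is too weak to reach $|E|^{\log|E|}$.

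To reach the sharper estimate I would reduce to the $p$-group case via Sylow theory. The $p$-primary component $M(E)_p$ embeds as a direct summand of $M(P)$ for any Sylow $p$-subgroup $P \leq E$, by the standard restriction--corestriction argument (the composition is multiplication by $[E:P]$, which is coprime to $p$). For a $p$-group $P$ of order $p^n$, Green's classical bound gives $|M(P)| \leq p^{n(n-1)/2}$. Multiplying over primes $p$ and writing $n_p$ for the $p$-adic valuation of $|E|$, the elementary inequalities $\sum_p n_p \log p = \log|E|$ and $n_p \log p \leq \log|E|$ give $\log|M(E)| \leq \frac{1}{2}(\log|E|)^2$, hence $|M(E)| \leq |E|^{(\log|E|)/2} \leq |E|^{\log|E|}$.

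The main obstacle is establishing Green's bound for $p$-groups: it is not a consequence of \corref{mainCor} and must be proved separately, most naturally via Hopf's formula applied inside the free nilpotent-of-class-two quotient on $d(P) \leq \log_p|P|$ generators, where the commutators $[x_i,x_j]$ generate the relevant abelian group of $p$-power order.
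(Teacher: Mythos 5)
Your argument is essentially the paper's: both proofs reduce to Sylow $p$-subgroups (the paper via Karpilovsky's primary decomposition $|M(E)|=\prod_p|M(E)\otimes_{\mathbb{Z}}\mathbb{Z}_p|$ together with $|M(E)\otimes_{\mathbb{Z}}\mathbb{Z}_p|\le|M(P)|$, which is exactly your restriction--corestriction step) and then invoke Green's bound for $p$-groups, which the paper simply cites from Karpilovsky; your more careful bookkeeping gives the slightly sharper $|E|^{(\log|E|)/2}$, but it is the same proof. (Your closing sketch of how one would prove Green's bound via a free class-two quotient on $d(P)$ generators is not needed --- the paper treats Green's theorem as a black box --- and as stated it would not recover the full $p^{n(n-1)/2}$ bound, which involves $n=\log_p|P|$ rather than $d(P)$.)
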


\begin{proof}

Let $S$ be the set of primes dividing $|E|$, and for each $p \in S$ fix a $p$-Sylow subgroup $P$ of $E$.
It follows from \cite[Proposition 2.1.1]{Karp} that
\begin{equation} \label{decprimEq}
|M(E)| = \prod_{p \in S} |M(E) \otimes_\mathbb{Z} \mathbb{Z}_p|.
\end{equation}
By \cite[Theorem 2.1.2]{Karp},
for every $p \in S$ we have
\begin{equation} \label{SylowBoundEq}
|M(E) \otimes_\mathbb{Z} \mathbb{Z}_p| \leq |M(P)|
\end{equation}
and by \cite[Corollary 3.1.5]{Karp}
\begin{equation} \label{GreenEq}
|M(P)| \leq |P|^{\log |P|}.
\end{equation}
Combining everything, we get that
\begin{equation}
\begin{split}
|M(E)| &\stackrel{\ref{decprimEq}}{\leq} \prod_{p \in S} |M(E) \otimes_\mathbb{Z} \mathbb{Z}_p| 
\stackrel{\ref{SylowBoundEq}}{\leq} \prod_{p \in S} |M(P)| \\
&\stackrel{\ref{GreenEq}}{\leq} \prod_{p \in S} |P|^{\log |P|} \leq \prod_{p \in S} |P|^{\log |E|} = |E|^{\log |E|}.
\end{split}
\end{equation}
\end{proof}

In terms of our presentation for $E$ we thus have
\begin{equation}
\begin{split}
\Big[ [F,F] : [F,N] \Big] &\leq \Big[ [F,F] : N \cap [F,F] \Big] \cdot \Big[ N \cap [F,F] : [F,N] \Big] \\
&\leq [F : N] \cdot |M(E)| \leq |E|^{1 + \log |E|}.
\end{split}
\end{equation}
In a manner similar to \propref{connProp} one can deduce the following.
\begin{cor} \label{MultipCor}
For a finite index normal subgroup $A_0$ of a group $A$ we have
\begin{equation}
\Big[ [A,A] : [A,A_0] \Big] \leq [A : A_0]^{1 + \log [A : A_0]}.
\end{equation} 

\end{cor}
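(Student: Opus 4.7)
The plan is to push the Schur multiplier bound just derived for free presentations forward through a surjection from a free group, in the spirit of the passage from \defref{RelsDef} to \propref{connProp}. First I would pick a free group $F$ together with a surjection $\psi \colon F \twoheadrightarrow A$, allowing $F$ to have arbitrary rank so that no finite generation hypothesis on $A$ is required, and set $N \defeq \psi^{-1}(A_0)$. Since $A_0 \lhd A$ and $\psi$ is surjective, $N \lhd F$, and $\psi$ induces an isomorphism $F/N \cong A/A_0 \defeq E$ onto a finite group of order $[A : A_0]$.

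Next I would apply the bound established immediately before the corollary to the free presentation
$$1 \longrightarrow N \longrightarrow F \longrightarrow E \longrightarrow 1,$$
obtaining
$$\Big[ [F,F] : [F,N] \Big] \leq |E|^{1 + \log|E|} = [A : A_0]^{1 + \log [A : A_0]}.$$

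Finally, since $\psi$ is a surjective homomorphism with $\psi(N) = A_0$, it sends $[F,F]$ onto $[A,A]$ and $[F,N]$ onto $[A,A_0]$; the latter holds because every generator $[a,a_0]$ of $[A,A_0]$ equals $\psi([f,n])$ for any $f,n \in F$ with $\psi(f) = a$ and $\psi(n) = a_0$. Hence $\psi$ induces a surjection $[F,F]/[F,N] \twoheadrightarrow [A,A]/[A,A_0]$, and comparing cardinalities yields the required inequality. There is no serious obstacle in this argument; it is essentially a pushforward of the Schur multiplier inequality, so the only point requiring care is allowing $F$ to have possibly infinite rank so that the result applies without any finite generation assumption on $A$.
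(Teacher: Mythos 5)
Your proof is correct and is exactly the argument the paper intends: the paper gives no explicit proof, saying only that the corollary follows ``in a manner similar to Proposition \ref{connProp},'' i.e.\ by pulling $A_0$ back along a free presentation $\psi\colon F\twoheadrightarrow A$, applying the displayed bound $\bigl[[F,F]:[F,N]\bigr]\le |E|^{1+\log|E|}$, and pushing forward. Your attention to allowing $F$ of arbitrary rank (and the observation that $\psi(n)=a_0$ forces $n\in N$) are exactly the right points of care.
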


\section{Upper bounds on the number of generators}

We establish several bounds on the number of generators of a finite index subgroup of a direct product, 
and conclude that the rank gradient vanishes.

\begin{thm} \label{TechResThm}

Let $A,B$ be finitely generated groups,
and let $H$ be a finite index subgroup of $G \defeq A \times B$.
Then $d(H)$ is bounded by:

\begin{enumerate}

\item $d(G)([G : AH] + [AH : H])$

\item $d(G)([G : BH] + [BH : H])$

\item $d(G)([G : AH] + 130[G : BH][G : H]^{3/7})$.

\end{enumerate}

\end{thm}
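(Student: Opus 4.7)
The plan is to set $L \defeq \pi_A(H) \leq A$ and $M \defeq \pi_B(H) \leq B$, where $\pi_A,\pi_B$ are the factor projections; a standard computation using $AB=G$ gives $[A:L] = [G:BH]$ and $[B:M] = [G:AH]$. I first show that $A \cap H \lhd L$ by applying \propref{NormalityProp} inside the direct product $L \times B \leq G$: the equality $\pi_A(H)=L$ yields $B \cdot H = L \times B$, whence $L \cap H \lhd L$, and $L \cap H = A \cap H$. A short Goursat-type computation then produces an injective homomorphism $L/(A \cap H) \hookrightarrow M/(B \cap H)$ (sending $\ell \bmod (A \cap H)$ to $\pi_B(h) \bmod (B \cap H)$ for any $h \in H$ with $\pi_A(h)=\ell$), giving in particular $|L/(A \cap H)| \leq [M:B \cap H] \leq [G:H]$.

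The central generation principle I will use is: if $K \leq H$ contains $A \cap H$ and satisfies $\pi_B(K) = M$, then $K = H$. Indeed, for $h \in H$, pick $k \in K$ with $\pi_B(k) = \pi_B(h)$; then $hk^{-1} \in \ker(\pi_B) \cap H = A \cap H \subseteq K$, so $h \in K$. To prove bound (1), I generate $K$ by a generating set of $A \cap H$ (of size $\leq d(A)[A:A \cap H] = d(A)[AH:H]$ via \eqref{SBEq}) together with lifts to $H$ of $d(M) \leq d(B)[G:AH]$ generators of $M$. Bound (2) is symmetric.

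For bound (3) the idea is to replace ``generators of $A \cap H$'' by ``\emph{normal} generators of $A \cap H$ inside $L$,'' which is cheaper by \corref{mainCor}; to realize the required $L$-conjugations inside $K$ I must then also include elements of $H$ whose $\pi_A$-images generate $L$. Concretely I take (i) a set $S_1 \subseteq A \cap H$ of size $d_L(A \cap H)$ normally generating $A \cap H$ in $L$, (ii) lifts $\tilde S_3 \subseteq H$ of $d(L) \leq d(G)[G:BH]$ generators of $L$, and (iii) lifts $\tilde S_2 \subseteq H$ of $d(M) \leq d(G)[G:AH]$ generators of $M$. The crucial point is that for $s \in S_1 \subseteq A$ and $k \in H$ with $\pi_A(k) = \ell$, the direct product structure forces $ksk^{-1} = \ell s \ell^{-1}$, since the $B$-component of $k$ commutes with $s$; hence (i) together with (ii) generate every $L$-conjugate of $S_1$, and these conjugates generate $A \cap H$. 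The generation principle then gives that $S_1 \cup \tilde S_3 \cup \tilde S_2$ generates $H$.

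The main obstacle is the bound on $d_L(A \cap H)$, handled by applying \propref{connProp} and \corref{mainCor} to the finite quotient $L/(A \cap H)$ of order at most $[G:H]$: this yields $d_L(A \cap H) \leq 128\,d(L)[G:H]^{3/7} \leq 128\,d(G)[G:BH][G:H]^{3/7}$. Summing the three contributions and absorbing the extra $d(G)[G:BH]$ term using $[G:H]^{3/7} \geq 1$, one concludes $d(H) \leq d(G)\bigl([G:AH] + 130[G:BH][G:H]^{3/7}\bigr)$.
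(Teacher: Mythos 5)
Your argument is correct and follows essentially the same route as the paper's proof: parts (1)--(2) via the Schreier bound \eqref{SBEq} on $A\cap H$ plus lifts of generators of $\pi_B(H)$, and part (3) via normal generation of $A\cap H=\Ker(\pi_B|_H)$ inside $\pi_A(H)$, realized by conjugating by lifts of generators of $\pi_A(H)$ (using that the $B$-components commute with $A$), with the count of normal generators controlled by \propref{connProp} and \corref{mainCor} applied to the quotient $\pi_A(H)/(A\cap H)$ of order at most $[G:H]$. The only cosmetic differences are that you make the Goursat isomorphism and the final ``generation principle'' explicit where the paper leaves them implicit.
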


\begin{proof}

For $(1)$ note that
\begin{equation} \label{ToGet1Eq}
\begin{split}
d(H) &\leq d(H/H \cap A) + d(H \cap A) \stackrel{\ref{SBEq}}{\leq} d(AH/A) + d(A)[A : H \cap A] \\
&\leq d(AH) + d(A)[AH : H] \stackrel{\ref{SBEq}}{\leq} d(G)[G : AH] + d(G)[AH : H]
\end{split}
\end{equation}
as required. To get $(2)$ just replace $A$ with $B$ in \eqref{ToGet1Eq}.

For $(3)$ let $\pi_A, \pi_B$ be the projections from $G$ onto $A,B$
and observe that $H \leq \pi_A(H) \times \pi_B(H)$ is a subgroup that complements $\pi_B(H)$ 
(that is, $\pi_B(H)H = \pi_A(H) \times \pi_B(H)$).
By \propref{NormalityProp}, $\pi_A(H) \cap H \lhd \pi_A(H)$.

We can thus take a subset $S \subseteq \pi_A(H) \cap H$ of least cardinality, with
\begin{equation} \label{SDefEq}
\langle S \rangle^{\pi_A(H)} = \pi_A(H) \cap H.
\end{equation}
Furthermore, take $R_A$ (respectively, $R_B$) to be a subset of $H$ mapped bijectively by $\pi_A$ (respectively, $\pi_B$) onto a generating set of $\pi_A(H)$ (respectively, $\pi_B(H)$) of least cardinality.  
Set $L \defeq \langle S \cup R_A \cup R_B \rangle$ and note that $L \leq H$.
Since $S \subseteq A$, conjugation by $B$ does not affect it, so we find that 
\begin{equation} \label{ContKerEq}
\begin{split}
\Ker(\pi_B|_{H}) &= \pi_A(H) \cap H \stackrel{\ref{SDefEq}}{=} \langle S \rangle^{\pi_A(H)} \\
&= \langle S \rangle^{B\pi_A(H)} = \langle S \rangle^{B\langle R_A \rangle} = \langle S \rangle^{\langle R_A \rangle} \stackrel{\ref{NormalDef}}{\leq} L.
\end{split}
\end{equation}
On the other hand, $\pi_B(H) = \pi_B(\langle R_B \rangle) \leq \pi_B(L)$ so in conjunction with \eqref{ContKerEq} we conclude that $L = H$.
Thus
\begin{equation} \label{3BoundEq}
\begin{split}
d(H) &= d(L) \leq |S| + |R_A| + |R_B| \\
&=  d_{\pi_A(H)}(\pi_A(H) \cap H) + d(\pi_A(H)) + d(\pi_B(H)) \\
&\stackrel{\ref{connProp}}{\leq} r(\pi_A(H)/\pi_A(H) \cap H, \pi_A(R_A)) + d(\pi_A(H)) + d(\pi_B(H)) \\
&\stackrel{\ref{mainCor}}{\leq} 128d(\pi_A(H))[G : H]^{3/7} + d(\pi_A(H)) + d(\pi_B(H)).
\end{split}
\end{equation}
Moreover,
\begin{equation} \label{32BoundEq}
d(\pi_A(H)) \stackrel{\ref{SBEq}}{\leq} d(A)[A : \pi_A(H)] = d(A)[G : BH] \leq d(G)[G : BH]
\end{equation}
and similarly, we have $d(\pi_B(H)) \leq d(G)[G : AH]$. Combining this inequality, \eqref{3BoundEq}, and \eqref{32BoundEq} we obtain $(3)$.
\end{proof}

Let us now deduce \thmref{Res}.

\begin{proof}
Suppose that our claim is false, 
so (after passing to a subsequence) we may assume that the limit in \eqref{aimRes} is positive.
By \thmref{TechResThm} $(1)$,
\begin{equation} \label{CorBoundEq}
\frac{d(U_n)}{[G : U_n]} \leq d(G)\Big(\frac{1}{[AU_n : U_n]} + \frac{1}{[G : AU_n]}\Big)
\end{equation}
and the first summand on the right hand side tends to $0$ in view of our assumption that $[A : A \cap U_n] \to \infty$ as $n \to \infty$. 
Since the left hand side of \eqref{CorBoundEq} tends to some $c > 0$,
we conclude that $[G : AU_n]$ is bounded as $n \to \infty$.
Similarly, $[G : BU_n]$ is bounded.
Finally, apply \thmref{TechResThm} $(3)$ to $U_n$. 
\end{proof}

\section{Lower bounds on the number of generators}

To which extent are the bounds in \thmref{TechResThm} tight?
Suppose that $A$ and $B$ are isomorphic to a free group $F$ on two generators.
Clearly, $(1)$ and $(2)$ are tight up to a constant once $H \defeq A_0 \times B_0$ where $A_0, B_0 \leq A,B$ are subgroups with $[A : A_0] = [B : B_0]$.

It is conjectured (\cite[Conjecture 2]{M}) that every finite group has a presentation with a logarithmic number of relations. 
If this improvement of \corref{mainCor} holds, then the argument from the proof of \thmref{TechResThm} $(3)$ gives a logarithmic bound on the number of generators as a function of the index of $H$ in $G$.
Let us show that such a bound is tight (up to a constant).
Fix a prime $p$, and let $\{P_n\}_{n=1}^{\infty}$ be the lower $p$-central series defined by
\begin{equation}
P_1=F, \ P_{n+1}= P_n^p[F,P_n].
\end{equation}
Set
\begin{equation}
p^{a_n} \defeq |P_n/P_{n+1}|, \quad b_n \defeq \sum_{i=1}^n a_i.
\end{equation} 
By \cite[Corollary 3.4]{HM}, $a_n= \sum_{i=1}^n r_i$ where $r_i$ are the Witt numbers given by
\begin{equation}
r_i= \frac{1}{i} \sum_{j |i} \mu \left(\frac{i}{j} \right) 2^j.
\end{equation}
It is thus easy to see that $r_n \sim 2^n/n$, and in particular the radius of convergence of $\sum_{n=1}^ \infty r_nx^n$ is $1/2$. 
Multiplying by $(\sum_{n=0}^\infty x^n)^2$ we deduce that $\sum_{n=1}^ \infty b_nx^n$ has the same radius of convergence,
and thus 
\begin{equation} \label{limEq}
\limsup_{n \to \infty} \frac{b_n}{b_{n-1}} \geq 2.
\end{equation}

Set $U_n \defeq \{(x,y) \in F \times F \ |  \ xP_n=yP_n \}$. 
We have
\begin{equation}
[F \times F : U_n] = p^{b_{n-1}} = [ F \times \{1\} : F\times \{1\} \cap U_n].
\end{equation}
We claim that $U_n$ maps onto the elementary abelian group $P_n/P_{n+1}$.
Let $\Delta \leq (F/P_{n+1}) \times (F/P_{n+1})$ be the image of $U_n$ mod $P_{n+1}$. 
That is 
\begin{equation}
\Delta \defeq \{  (xP_{n+1}, yP_{n+1}) \in (F/P_{n+1}) \times (F/P_{n+1}) \ | \ xP_n=yP_n \}.
\end{equation}
Let $L \defeq \{ (xP_{n+1},xP_{n+1}) \ | \ x \in F \}$ be the diagonal subgroup of $\Delta$.
Clearly, $L$ is a normal subgroup of $\Delta$ that commutes with $(P_n/P_{n+1}) \times (P_n/P_{n+1})$. 
Moreover, $L \cap \big((P_n/P_{n+1}) \times (P_n/P_{n+1})\big)$ is the diagonal subgroup isomorphic to $P_n/P_{n+1}$. 
It follows that 
\begin{equation}
\frac{\Delta}{L} \cong \frac{(P_n/P_{n+1}) \times (P_n/P_{n+1})}{L \cap ((P_n/P_{n+1}) \times (P_n/P_{n+1}))} \cong P_n/P_{n+1}.
\end{equation}
Hence $U_n$ has $P_n/P_{n+1}$ as a homomorphic image and therefore $d(U_n) \geq a_n$.
At last, note that for any $\epsilon > 0$ we have
\begin{equation} \label{LastEq}
\frac{d(U_n)}{\log_p [F \times F : U_n]} \geq \frac{a_n}{b_{n-1}}= \frac{b_{n}}{b_{n-1}}-1 \stackrel{\ref{limEq}}{\geq} 1-\epsilon
\end{equation}
where the last inequality holds for infinitely many values of $n$ (that is, \eqref{LastEq} holds for a subsequence of $\{U_n\}_{n=1}^{\infty}$).

\section{Upper bounds on torsion in the abelianization}

In the following we give the bound needed to establish \thmref{SecRes}.
The torsion subgroup of an abelian group $M$ is denoted by $t(M)$.

\begin{lem} \label{TorLem}

Let $A,B$ be finitely generated groups,
and let $H$ be a finite index subgroup of $G \defeq A \times B$.
Then
\begin{equation} \label{TorBoubdEq}
|t(H^{\mathrm{ab}})| \leq 
|t(\pi_A(H)^{\mathrm{ab}})| \cdot |t(\pi_B(H)^{\mathrm{ab}})| \cdot [G : H]^{2(1 + \log [G : H])}.
\end{equation} 

\end{lem}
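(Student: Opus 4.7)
The plan is to analyze the inclusion $H \hookrightarrow \pi_A(H) \times \pi_B(H)$ and the map it induces on abelianizations. Set $A' \defeq \pi_A(H)$, $B' \defeq \pi_B(H)$, $H_A \defeq H \cap A$, and $H_B \defeq H \cap B$. As at the start of the proof of \thmref{TechResThm}$(3)$, \propref{NormalityProp} applied to the ambient direct product $A' \times B'$ (which equals $B' H$) gives $H_A \lhd A'$, and symmetrically $H_B \lhd B'$; the projections furnish natural isomorphisms $H/(H_A \times H_B) \cong A'/H_A \cong B'/H_B$ of some common order $q$, and comparing $|G|/|H|$ through $A' \times B'$ shows $q \leq [G:H]$.

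Now consider the homomorphism $\psi \colon H^{\mathrm{ab}} \to (A' \times B')^{\mathrm{ab}} = A'^{\mathrm{ab}} \times B'^{\mathrm{ab}}$ induced by inclusion, and set $K \defeq \ker \psi$. The short exact sequence $0 \to K \to H^{\mathrm{ab}} \to \mathrm{im}\,\psi \to 0$ of finitely generated abelian groups yields (using the identification $t(K) = K \cap t(H^{\mathrm{ab}})$) the standard estimate $|t(H^{\mathrm{ab}})| \leq |t(K)| \cdot |t(\mathrm{im}\,\psi)|$, and since $\mathrm{im}\,\psi$ is a subgroup of the abelian group $A'^{\mathrm{ab}} \times B'^{\mathrm{ab}}$ its torsion is contained in $t(A'^{\mathrm{ab}}) \times t(B'^{\mathrm{ab}})$, giving $|t(\mathrm{im}\,\psi)| \leq |t(\pi_A(H)^{\mathrm{ab}})| \cdot |t(\pi_B(H)^{\mathrm{ab}})|$. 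It therefore suffices to prove $|K| \leq [G:H]^{2(1 + \log [G:H])}$.

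To bound $|K|$, first identify it with $(H \cap ([A',A'] \times [B',B']))/[H,H]$ and note that $[H,H] \leq [A',A'] \times [B',B']$, so $|K| \leq [\,[A',A'] \times [B',B'] : [H,H]\,]$. The crucial observation is that $[A', H_A] \times [B', H_B] \leq [H,H]$: given $x \in A'$, lift to $(x,y) \in H$, and for $a \in H_A$ note $(a,1) \in H$, so $[(x,y),(a,1)] = ([x,a],1) \in [H,H]$, and symmetrically for the other factor. The index $[\,[A',A'] \times [B',B'] : [A', H_A] \times [B', H_B]\,]$ factors over the two coordinates, and two applications of \corref{MultipCor} to the normal subgroups $H_A \lhd A'$ and $H_B \lhd B'$ (both of index $q$) bound it by $q^{2(1+\log q)} \leq [G:H]^{2(1+\log [G:H])}$, completing the argument.

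The main obstacle is pinpointing the right auxiliary subgroup of $[H,H]$ on which to deploy the Schur multiplier bound of \corref{MultipCor}. The containment $[A', H_A] \times [B', H_B] \leq [H,H]$, which rests on the surjectivity $\pi_A(H) = A'$ together with the fact that $H_A, H_B$ sit in commuting factors, is the essential geometric input; the rest is formal manipulation of torsion in short exact sequences of finitely generated abelian groups.
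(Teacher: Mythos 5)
Your proposal is correct and follows essentially the same route as the paper: the kernel $K$ of $H^{\mathrm{ab}} \to \pi_A(H)^{\mathrm{ab}} \times \pi_B(H)^{\mathrm{ab}}$ is exactly the subgroup $M$ the paper works with, your containment $[A',H_A]\times[B',H_B]\leq [H,H]\leq [A',A']\times[B',B']$ is the paper's key inequality, and the two applications of \corref{MultipCor} to $H_A\lhd\pi_A(H)$ and $H_B\lhd\pi_B(H)$ match the paper's argument.
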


\begin{proof}
We use the shorthand $K'$ to denote the commutator subgroup $[K,K]$ of a group $K$, and set
\begin{equation} \label{TorspDefEq}
A_0 \defeq \pi_A(H) \cap H = A \cap H, \quad B_0 \defeq \pi_B(H) \cap H = B \cap H.
\end{equation} 
Applying \propref{NormalityProp}, we conclude that $A_0 \lhd \pi_A(H), \ B_0 \lhd \pi_B(H)$ are normal subgroups of finite index.
One shows easily that
\begin{equation} \label{EasEq}
[\pi_A(H), A_0] \times [\pi_B(H), B_0] \leq H' \leq H \cap \big(\pi_A(H)' \times \pi_B(H)' \big)
\end{equation}
so by \corref{MultipCor} we have
\begin{equation} \label{LTorpEq}
\begin{split}
&\Big[H \cap \big(\pi_A(H)' \times \pi_B(H)' \big) : H'\Big] \leq \\
&\Big[\pi_A(H)' \times \pi_B(H)' : H'\Big] \stackrel{\ref{EasEq}}{\leq} \\
&\Big[\pi_A(H)' \times \pi_B(H)' : [\pi_A(H), A_0] \times [\pi_B(H), B_0]\Big] = \\
&\Big[\pi_A(H)' : [\pi_A(H), A_0] \Big] \cdot \Big[\pi_B(H)' : [\pi_B(H), B_0]\Big] \stackrel{\ref{MultipCor}}{\leq} \\
&\Big[\pi_A(H) : A_0\Big]^{1 + \log [\pi_A(H) : A_0]} \cdot \Big[\pi_B(H) : B_0\Big]^{1 + \log [\pi_B(H) : B_0]} 
\stackrel{\ref{TorspDefEq}}{\leq} \\
&\Big[G : H \Big]^{2(1 + \log[G : H])}.
\end{split}
\end{equation}
In particular, by \eqref{LTorpEq}, the subgroup 
\begin{equation}
M \defeq \Big( H \cap \big(\pi_A(H)' \times \pi_B(H)' \big) \Big) / H'
\end{equation}
of $H^{\mathrm{ab}}$ is finite, so
\begin{equation}
\begin{split}
|t(H/H')| &= |M| \cdot |H^{\mathrm{ab}}/M| \leq |M| \cdot t\Big(\big(\pi_A(H) \times \pi_B(H)\big)^{\mathrm{ab}}\Big) \\
&\stackrel{\ref{LTorpEq}}{\leq} [G : H]^{2(1 + \log [G : H])} \cdot |t(\pi_A(H)^{\mathrm{ab}})| \cdot |t(\pi_B(H)^{\mathrm{ab}})|.
\end{split} 
\end{equation}
\end{proof}

In order to obtain \thmref{SecRes} from \lemref{TorLem}, one just has to note that our assumption \eqref{IndAs2} on the growth of the indices to infinity in \thmref{SecRes}, implies that the first two factors in \eqref{TorBoubdEq} grow subexponentially with the index of a subgroup in a sequence, since the torsion in the abelianization of finite index subgroups of a finitely presented group grows at most exponentially with the index. 

\section*{Acknowledgments}
This work is partially supported by the ERC grant ANALYTIC no. 259527 of Goulnara Arzhantseva.
Mark Shusterman is grateful to the Azrieli Foundation for the award of an Azrieli Fellowship.
The third author was partially supported by a grant of the Israel Science Foundation with cooperation of UGC no. 40/14.

\end{document}